\newtheorem{thm}{Theorem}[section]
\newtheorem{cor}[thm]{Corollary}
\newtheorem{lem}[thm]{Lemma}
\newtheorem{pro}[thm]{Proposition}
\newtheorem{df}[thm]{Definition}
\newtheorem{rmk}[thm]{Remark}
\newtheorem{ex}[thm]{Example}
\newtheorem{lem-df}[thm]{Lemma-Definition}
\newtheorem{conj}[thm]{Conjecture}
\newtheorem{rmk-df}[thm]{Remark-Definition}
\newtheorem*{thm*}{Theorem}
\newcommand{\beq}{\begin{equation}}
\newcommand{\enq}{\end{equation}}
\newcommand{\beqn}{\begin{equation*}}
\newcommand{\enqn}{\end{equation*}}
\newcommand{\ra}{\rightarrow}
\newcommand{\hra}{\hookrightarrow}
\newcommand{\longra}{\longrightarrow}
\newcommand{\longhra}{\ensuremath{\lhook\joinrel\relbar\joinrel\rightarrow}}
\newcommand{\mC}{\mathbb{C}}
\newcommand{\mP}{\mathbb{P}}
\newcommand{\caC}{\mathcal{C}}
\newcommand{\caE}{\mathcal{E}}
\newcommand{\caF}{\mathcal{F}}
\newcommand{\caK}{\mathcal{K}}
\newcommand{\caM}{\mathcal{M}}
\newcommand{\caO}{\mathcal{O}}
\newcommand{\caT}{\mathcal{T}}
\DeclareMathOperator{\Hom}{Hom}
\DeclareMathOperator{\Ext}{Ext}
\DeclareMathOperator{\im}{im}
\DeclareMathOperator{\Alb}{Alb}
\DeclareMathOperator{\Cliff}{Cliff}
\DeclareMathOperator{\Spec}{Spec}
\DeclareMathOperator{\rk}{rk}
\begin{document}

\title{Xiao's conjecture for general fibred surfaces}

\author{Miguel \'Angel Barja, V\'{\i}ctor Gonz\'{a}lez-Alonso and Juan Carlos Naranjo\footnote{During the developement of this work, the first and second authors were supported by the Spanish ``Ministerio de Econom\'{\i}a y Competitividad'' (through the project MTM2012-38122-C03-01/FEDER) and the ``Generalitat de Catalunya'' (through the project 2009-SGR-1284), and the third author was supported by the project MTM2012-38122-C03-02 of the Spanish ``Ministerio de Econom\'{\i}a y Competitividad''. The second author was also supported by the grant FPU-AP2008-01849 of the Spanish ``Ministerio de Educaci\'{o}n'' and by the ERC StG 279723 ``Arithmetic of algebraic surfaces'' (SURFARI).}}

\maketitle

\begin{abstract}
We prove that the genus $g$, the relative irregularity $q_f$ and the Clifford index $c_f$ of a {\em non-isotrivial} fibration $f$ satisfy the inequality $q_f \leq g - c_f$. This gives in particular a proof of Xiao's conjecture for fibrations whose general fibres have maximal Clifford index.
\end{abstract}

\section{Introduction}

\label{sect-intro}

In the classification of smooth algebraic surfaces it is natural to study its possible fibrations over curves, trying to relate the geometry of the surface to the properties of the fibres and the base. In this article we focus on the relations between numerical invariants of a fibration, proving Xiao's conjecture for fibrations whose general fibres have maximal Clifford index.

Let $f: S \ra B$ be a fibration from a compact surface $S$ to a compact curve $B$, (that is, a surjective morphism with connected fibres), and let $F$ be a general (smooth) fibre of $f$. The fibration is called {\em isotrivial} if all the smooth fibres are mutually isomorphic, and it is {\em trivial} if $S$ is birational to $B \times F$ and the given fibration corresponds to the first projection.

We first consider the genus $g$ of $F$ (also called the genus of $f$) and the relative irregularity $q_f = q\left(S\right) - g\left(B\right)$. Beauville showed in its Appendix to \cite{Deb-Beau} that
\beq \label{ineq-Beau}
0 \leq q_f \leq g,
\enq
and the equality $q_f = g$ holds if and only if $f$ is {\em trivial}. As a consequence of the work of Serrano \cite{Ser-Iso}, non-trivial isotrivial fibrations satisfy 
\beq \label{main-ineq}
q_f \leq \frac{g+1}{2}.
\enq

For non-isotrivial fibrations, the only known general upper bound for $q_f$ is
\beq \label{bound-5/6}
q_f \leq \frac{5g+1}{6},
\enq
proven by Xiao in \cite{Xiao-5/6}. However, in his later work \cite{Xiao-P1}, Xiao says literally that ``it is unlikely that this inequality gives the best bound for $q$, since its proof is not very accurate''. By {\em not very accurate} he might mean that his proof uses only properties of the first step of the Harder-Narasimham filtration of the vector bundle $f_*\omega_{S/B}$, and hence the result might be improved by taking into account the complete filtration. In fact, in the same work \cite{Xiao-P1} he proves that, in the special case in which the base is $B \cong \mP^1$, the upper-bound (\ref{main-ineq}) holds for any non-trivial fibration, regardless whether it is isotrivial or not. In view of this result, Xiao conjectured in \cite{Xiao-conj} that the inequality (\ref{main-ineq}) should hold for every non-trivial fibration, and he provided an example attaining the equality. This conjecture was shown to be false by Pirola in \cite{Pir-Xiao}, where he provided a non-isotrivial fibration with fibres of genus $g = 4$ and relative irregularity $q_f = 3 \not\leq \frac{5}{2} = \frac{g+1}{2}$. Recently, Albano and Pirola \cite{Alb-Pir} have obtained more counterexamples with genus $g=6$ and $10$, all of them satisfying
\beqn
q_f = \frac{g}{2}+1 = \frac{g+1}{2} + \frac{1}{2}.
\enqn
The fact that in all known counterexamples the conjecture fails by exactly $\frac{1}{2}$ naturally leads to the following modification.
\begin{conj}[modified Xiao's conjecture] \label{final-conj}
For any non-trivial fibration $f: S \ra B$ one  has
\beqn
q_f \leq \frac{g}{2}+1,
\enqn
or equivalently
\beqn
q_f \leq \left\lceil\frac{g+1}{2}\right\rceil.
\enqn
\end{conj}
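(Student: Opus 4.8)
First note that the isotrivial case is already settled: a non-trivial isotrivial $f$ satisfies $q_f\le (g+1)/2\le g/2+1$ by Serrano's bound, so we may assume $f$ non-isotrivial and invoke the main inequality $q_f\le g-c_f$. The strategy is then to stratify by the Clifford index $c_f$ of the general fibre. Since a general curve of genus $g$ has $c_f=\lfloor(g-1)/2\rfloor$, and $g-\lfloor(g-1)/2\rfloor=\lceil(g+1)/2\rceil$, the main result already proves the conjecture for every fibration whose general fibre has maximal Clifford index; in particular it settles $g=4$, where the Clifford index is necessarily maximal, recovering the fact that Pirola's genus-$4$ fibration satisfies $q_f\le 3=\lceil 5/2\rceil$. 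What remains is the case of fibres carrying a \emph{special} linear series, $c_f\le\lfloor(g-1)/2\rfloor-1$, for which $q_f\le g-c_f$ is strictly weaker than the conjectured bound and a genuinely different argument is needed.

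For such special fibrations I would distinguish by Clifford \emph{dimension}. If the Clifford dimension is $\ge 2$, the theorem of Eisenbud--Lange--Martens--Schreyer restricts the general fibre to one of finitely many explicit families (smooth plane curves and a short list of further exceptional types), each carrying a tight relation between $g$ and the gonality; one then relates $f$ to the corresponding family of plane (or exceptional) models and estimates $q_f$ directly from that geometry. The essential case is Clifford dimension $1$: the general fibre has a $g^1_d$ with $d=c_f+2$, which globalizes --- after a base change that one must check does not decrease $q_f$ --- to a relative $g^1_d$, i.e.\ a degree-$d$ rational map $S\dra P$ onto a ruled surface $\rho\colon P\ra B$. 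Since $q(P)=g(B)$, the quantity $q_f=q(S)-q(P)$ is exactly the irregularity \emph{added} by a degree-$d$ cover of a ruled surface with branch divisor of the prescribed numerical type, and the conjecture becomes the assertion that this added irregularity is at most $g/2+1$.

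To finish this last and hardest step I would combine a Castelnuovo-type count with a deformation argument. On the Hodge-theoretic side, the ``new'' part of $H^0(S,\Omega^1_S)$ embeds into the non-trivial eigensheaves of the cover, which fibrewise recover the $\caO_{\mP^1}$-summands attached to the $g^1_d$; bounding the global sections of these eigensheaves bounds $q_f$ above by a function of $d$. On the deformation side, one imports Pirola's adjoint-form technique --- the same circle of ideas used by Pirola and by Albano--Pirola to disprove the original conjecture --- which should supply the sharp constant and account for the extra $\tfrac{1}{2}$, since the known extremal examples all have low Clifford index and attain $q_f=g/2+1$. The main obstacle, as I see it, is to make this covering-plus-adjoint analysis uniform over all admissible branch divisors and all $d\le\lfloor(g+1)/2\rfloor$, so that it yields the clean bound $q_f\le g/2+1$ rather than an estimate that degrades as the gonality drops --- equivalently, to prove that the extremal behaviour is always of Albano--Pirola type and never more degenerate.
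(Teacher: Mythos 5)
The statement you are addressing is labelled in the paper as a \emph{conjecture} (Conjecture \ref{final-conj}): the paper does not prove it, and neither does your proposal. What the paper proves is Theorem \ref{main-thm}, $q_f \le g - c_f$ for non-isotrivial fibrations, plus the observation (its Corollary) that this yields the conjectured bound precisely when $c_f$ is maximal, $c_f = \lfloor (g-1)/2\rfloor$, together with Serrano's bound in the isotrivial case. Your first paragraph reproduces this reduction correctly, up to a small slip: for $g=4$ the Clifford index is \emph{not} necessarily maximal (hyperelliptic fibres have $c_f=0$), and that case is instead covered by Cai's theorem for hyperelliptic fibres, which the paper cites. Up to that point you are on the same ground as the paper.

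The gap is everything after that. For $1 \le c_f \le \lfloor (g-1)/2\rfloor - 1$ the inequality $q_f \le g - c_f$ is strictly weaker than $q_f \le \frac{g}{2}+1$, and your proposed route --- globalize the $g^1_d$ with $d = c_f+2$ to a relative degree-$d$ cover of a ruled surface, bound the irregularity coming from the non-trivial eigensheaves, and sharpen with adjoint forms --- is a research program, not an argument. You flag the fatal point yourself: the eigensheaf count produces a bound that degrades as the gonality drops, and you give no mechanism to prevent this; producing such a mechanism \emph{is} the open problem. There are further unaddressed issues: the fibrewise $g^1_d$ need not glue to a relative one without controlling the monodromy on the $W^1_d$'s of the fibres; the resulting cover $S \dra P$ need not be Galois, so ``eigensheaves'' require a Galois closure, which changes both $g$ and $q_f$; and the Clifford-dimension $\ge 2$ branch is only gestured at. Since the conjecture remains open beyond the maximal-Clifford-index case, your proposal establishes nothing beyond the paper's Corollary and should not be presented as a proof of Conjecture \ref{final-conj}.
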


Note that for odd values of $g$, the bound in Conjecture \ref{final-conj} is equivalent to the inequality (\ref{main-ineq}) originally conjectured by Xiao. It is worth to note that the original version of the conjecture has been proved for several classes of fibrations. On the one hand, in addition to the counterexample, Pirola proves in \cite{Pir-Xiao} that the conjecture holds if a sort of Abel-Jacobi map (from the base of the fibration to a primitive intermediate Jacobian) is constant. On the other hand, already in 1998 Cai \cite{Cai} proved the conjecture for fibrations whose general fibre is either hyperelliptic or bielliptic.

In this article we prove the following
\begin{thm} \label{main-thm}
Let $f: S \ra B$ be a fibration of genus $g \geq 2$, relative irregularity $q_f$ and Clifford index $c_f$. If $f$ is non-isotrivial, then
\beqn
q_f \leq g-c_f.
\enqn
\end{thm}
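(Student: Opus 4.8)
The plan is to use the variation of Hodge structure of $f$ to turn the inequality into a statement about linear series on a general fibre, and then to attack that statement with the "adjoint form" technology combined with Clifford's theorem.

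\emph{Step 1: Hodge-theoretic reduction.} Let $B^{0}\subseteq B$ be the smooth locus of $f$, fix a general $b\in B^{0}$ and write $F=f^{-1}(b)$. Choose $V\subseteq H^{0}(S,\Omega^{1}_{S})$ mapping isomorphically onto $H^{0}(S,\Omega^{1}_{S})/f^{*}H^{0}(B,\Omega^{1}_{B})$, so $\dim V=q_{f}$; we may assume $q_{f}\geq 2$, the cases $q_{f}\leq 1$ being immediate. Since $S$ is compact Kähler every $\omega\in V$ is $d$-closed, hence defines a class in $H^{1}(S,\mC)$, and restriction to fibres gives a morphism of local systems $\underline{V}\to R^{1}f_{*}\mC|_{B^{0}}$. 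This morphism is injective (an element of its kernel would be a $1$-form pulled back from $B$, hence zero in $V$), and since the restricted forms are of Hodge type $(1,0)$ its image $\mathbb{V}$ satisfies $\mathbb{V}\otimes\caO_{B^{0}}\subseteq f_{*}\omega_{S/B}|_{B^{0}}$. Thus $\mathbb{V}\otimes\caO$ is a \emph{flat} subbundle of the Hodge bundle, the evaluation map $\rho_{b}\colon V\to H^{0}(F,\omega_{F})$, $\omega\mapsto\omega|_{F}$, is injective, and $W_{b}:=\rho_{b}(V)$ has dimension $q_{f}$. Flatness of $\mathbb{V}$ forces the second fundamental form of $f_{*}\omega_{S/B}\hookrightarrow R^{1}f_{*}\mC\otimes\caO$ to vanish on $W_{b}$, i.e.\ $W_{b}\subseteq\ker\bigl(\xi_{b}\cup-\colon H^{0}(F,\omega_{F})\to H^{1}(F,\caO_{F})\bigr)$, where $\xi_{b}\in H^{1}(F,T_{F})$ is the Kodaira--Spencer class; and $\xi_{b}\neq 0$ since $f$ is non-isotrivial (Torelli). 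So it suffices to prove $\dim W_{b}\leq g-c_{f}$.

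\emph{Step 2: why first order is not enough, and the adjoint construction.} Let $Z$ be the base divisor of $|W_{b}|\subseteq|\omega_{F}|$ and $L=\omega_{F}(-Z)$, so $W_{b}\subseteq H^{0}(F,L)$ and $|W_{b}|$ is base-point-free in $|L|$. The inclusion $W_{b}\subseteq\ker(\xi_{b}\cup-)$ by itself is too weak: for a Schiffer-type class the kernel can have dimension $g-1$. The extra information is that $\mathbb{V}$ is flat (not merely that its second fundamental form vanishes pointwise), which is exactly the hypothesis that feeds the adjoint construction of Collino--Pirola and Pirola--Zucconi. Applying it to the forms spanning $W_{b}$ in the direction $\xi_{b}$, one attaches to $W_{b}$ a space of adjoint forms in an appropriate twist of $H^{0}(F,\omega_{F}^{\otimes 2})$ and obtains a dichotomy.

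\emph{Step 3: the dichotomy.} \textbf{(a) Non-degenerate case.} If the adjoint image is not decomposable, it has dimension at least $\dim W_{b}-1$ and its elements impose enough vanishing conditions on $L$ (controlling, in particular, the base divisor $Z$) to force $\Cliff(L)=\deg L-2(h^{0}(L)-1)\leq g-q_{f}$; since $q_{f}\geq 2$ gives $h^{0}(L)\geq 2$ and $h^{1}(L)\geq 2$, the bundle $L$ contributes to the Clifford index, so $c_{f}\leq\Cliff(L)\leq g-q_{f}$, which is the assertion. \textbf{(b) Degenerate case.} Otherwise the adjoint forms come from a fixed map $\pi\colon F\to\Gamma$ onto a curve of genus strictly less than $g$ (equivalently, $|W_{b}|$ is composed with a pencil). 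Doing this for the general $b$, the curves $\Gamma$ fit into a fibration $h\colon\Sigma\to B$ with a dominant rational map $S\dra\Sigma$ over $B$ through which the flat classes of $\mathbb{V}$ factor. A monodromy and rigidity argument then shows that either $f$ is isotrivial, contradicting the hypothesis, or $h$ is non-isotrivial of smaller genus, to which the inequality applies inductively; comparing the invariants of $f$ and of $h$ one again gets $q_{f}\leq g-c_{f}$.

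\emph{Main obstacle.} The delicate point is Step 3. In case (a) one must make the adjoint estimate precise enough to land exactly on the value $g-c_{f}$ — this requires genuine control of the base divisor $Z$ and of $h^{0}(L)$ so that Clifford's inequality is used with a line bundle that really sees $c_{f}$. In case (b) one must convert "$|W_{b}|$ composed with a pencil for every fibre" into an honest sub-fibration $h\colon\Sigma\to B$ and then exclude, using non-isotriviality, the degenerations of that pencil; this is where the hypotheses are essentially used. The whole argument rests on exploiting the full flatness of $\mathbb{V}$ rather than the first-order Kodaira--Spencer condition alone, which is precisely what the adjoint theorem is designed for.
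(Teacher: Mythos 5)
Your Step 1 is correct and matches the paper's starting point: $V_f$ injects into $H^0(F,\omega_F)$ with image contained in $K_{\xi_b}=\ker(\cup\,\xi_b)$. But Steps 2 and 3 are where the proof actually needs to happen, and there your argument has genuine gaps.

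The central missing ingredient is a precise lower bound on $\rk\xi_b$ in terms of the Clifford index of a divisor. The paper's proof hinges on Theorem~\ref{thm-supp-lower-bound} (due to Ginensky, reproved by Pirola): if $\xi$ is \emph{minimally} supported on an effective divisor $D$, then $\rk\xi\geq\Cliff(D)$. Your case~(a) asserts that, when the adjoint image is ``non-degenerate,'' its elements ``impose enough vanishing conditions on $L$'' to force $\Cliff(L)\leq g-q_f$; but no such statement is part of the adjoint-image machinery of Collino--Pirola or Pirola--Zucconi, and you give no argument for it. In fact the Adjoint Theorem works in the opposite direction: when the adjoint class \emph{vanishes}, one obtains a supporting divisor; when it is non-zero the Volumetric Theorem produces non-isotriviality statements, not Clifford-index bounds. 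There is also a concrete error in case~(a): taking $Z$ to be the base divisor of $|W_b|$ and $L=\omega_F(-Z)$, you need $h^1(L)\geq 2$, i.e.\ $h^0(\caO_F(Z))\geq 2$, for $L$ to contribute to $\Cliff(F)$. But $Z$ is a base divisor, so it can perfectly well be rigid (indeed $Z=0$ if $|W_b|$ is base-point-free), in which case your appeal to Clifford's index is vacuous.

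Your case~(b) is also more sketch than proof: converting ``$|W_b|$ composed with a pencil for every fibre'' into an honest sub-fibration $h\colon\Sigma\to B$ and then running an induction on genus is not carried out, and the inductive step is not obviously available since $\Sigma\to B$ need not satisfy the same hypotheses. The paper's route is different and sharper: once $q_f>g-c_f$ forces $q_f>\frac{g+1}{2}$, Theorem~\ref{thm-supp-div} provides (after base change) a global supporting divisor $D$. If $D$ restricts to a \emph{rigid} divisor on the general fibre, the structure Theorem~\ref{thm-struc} yields a second fibration $h\colon S\to B'$ with $g(B')=q_f$, and a one-line Riemann--Hurwitz computation shows $\deg(\phi\colon C\to B')=1$, hence $f$ is isotrivial. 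If instead $D$ restricts to a \emph{moving} divisor, one normalizes $D$ to a union of sections, passes to a minimal supporting subdivisor, and applies the Ginensky--Pirola bound $\rk\xi_b\geq\Cliff(D_{|C_b})\geq c_f$ to contradict $q_f\leq g-\rk\xi_b$. The dichotomy rigid/moving is the correct one; yours (degenerate/non-degenerate adjoint image) does not align with the available theorems and leaves both branches unproved.
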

The {\em Clifford index} of $f$, $c_f$, was introduced by Konno in \cite{Kon} (Def. 1.1) as the Clifford index of a general fibre. It is in fact the maximum of the Clifford indexes of the smooth fibres, which is attained over a non-empty Zariski-open subset of $B$. The Clifford index has a role in several improvements of the {\em slope-inequality}, as those obtained by Konno himself, and by Barja and Stoppino in \cite{Bar-Sto}.

Note that, as soon as $c_f > \frac{g-1}{6}$, Theorem \ref{main-thm} is an improvement of Xiao's general inequality (\ref{bound-5/6}). In the particular case of maximal Clifford index, we obtain the following
\begin{cor}
If $f$ is not trivial and $c_f$ is maximal, i.e., $c_f = \left\lfloor\frac{g-1}{2}\right\rfloor$, then $q_f \leq \left\lceil\frac{g+1}{2}\right\rceil$.
\end{cor}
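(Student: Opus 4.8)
The plan is to reduce the Corollary to Theorem~\ref{main-thm} by splitting into two cases according to whether $f$ is isotrivial, since Theorem~\ref{main-thm} only covers the non-isotrivial case while the Corollary asks merely for non-triviality.

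In the non-isotrivial case I would invoke Theorem~\ref{main-thm} to get $q_f \leq g - c_f$ and substitute the maximality hypothesis $c_f = \left\lfloor\frac{g-1}{2}\right\rfloor$. What is then left is the elementary identity
\beqn
g - \left\lfloor\frac{g-1}{2}\right\rfloor = \left\lceil\frac{g+1}{2}\right\rceil,
\enqn
which one checks by writing $g = 2k$ or $g = 2k+1$: in both cases the left-hand side equals $k+1$, and so does the right-hand side. Hence $q_f \leq \left\lceil\frac{g+1}{2}\right\rceil$ in this case.

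In the isotrivial case $f$ is still assumed non-trivial, so it is a non-trivial isotrivial fibration and the inequality~(\ref{main-ineq}), which follows from Serrano's work \cite{Ser-Iso}, gives $q_f \leq \frac{g+1}{2} \leq \left\lceil\frac{g+1}{2}\right\rceil$. Combining the two cases proves the Corollary.

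I do not expect any genuine obstacle here, since the Corollary is a formal consequence of Theorem~\ref{main-thm}. The only point requiring attention is that the hypothesis ``non-isotrivial'' of Theorem~\ref{main-thm} is strictly stronger than ``non-trivial'', so the isotrivial fibrations must be excluded by hand and handled via the already-available bound~(\ref{main-ineq}); everything else is just the floor/ceiling computation above.
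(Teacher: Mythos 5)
Your proof is correct and is the natural (and essentially unique) deduction: the paper gives no explicit proof of the Corollary, but the intended argument is exactly the case split you describe, combining Theorem~\ref{main-thm} for non-isotrivial $f$ with Serrano's bound~(\ref{main-ineq}) for non-trivial isotrivial $f$, plus the floor/ceiling identity $g - \lfloor (g-1)/2 \rfloor = \lceil (g+1)/2 \rceil$. You correctly spotted the one point of care, namely that the Corollary's hypothesis (non-trivial) is weaker than that of Theorem~\ref{main-thm} (non-isotrivial), so the isotrivial case must be disposed of separately.
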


Since the set of curves with maximal Clifford index is a Zariski-open subset of the moduli space $\caM_g$ of curves of genus $g$, it makes sense to say that such a fibration is a {\em general fibration}. Hence Theorem \ref{main-thm} can be interpreted as the proof of Conjecture \ref{final-conj} for general fibrations.

In order to prove Theorem \ref{main-thm}, we first need the existence of a supporting divisor for $f$, which is guaranteed by the more general study of families of irregular varieties carried out by the second named author in \cite{Xiao-1}. By a {\em supporting divisor} for $f$, we mean a divisor on $S$ whose restriction to a general fibre supports the corresponding first-order deformation induced by $f$ (see Definitions \ref{df-supp-infinit} and \ref{df-supp-global}). Once this divisor is obtained, we must consider whether its restriction to a general fibre is rigid or not. On the one hand, if it is rigid we can conclude using a structure result also proved in \cite{Xiao-1} (Theorem \ref{thm-struc}). This case can also be handled in an alternative way, with more local flavor and quite similar to Xiao's original method. This different approach is the content of the last section of the paper. On the other hand, if the supporting divisor moves in every fibre, we need a result on the rank of first-order deformations of curves (Theorem \ref{thm-supp-lower-bound}). This result was stated by Ginensky as part of a more general theorem in \cite{Gin}, whose original proof contains some inacuracies. Though the part of the proof we need can be completed and slightly shortened, we have decided to include here a different, much shorter proof, suggested to us by Pirola.

\medskip

\textbf{Acknowledgements:} We would like to thank Prof. Gian Pietro Pirola for the many stimulating discussions around this topic, especially for presenting to us several counterexamples to the original conjecture of Xiao and for suggesting the new proof of the result of Ginensky (Theorem \ref{thm-supp-lower-bound}). We are also grateful to the referees for their suggestions, which helped us to improve the exposition of our results.

\medskip

\textbf{Basic assumptions and notation:} Throughout the whole article, all varieties are assumed to be smooth and defined over $\mC$. Unless otherwise is explicitly said, $f: S \ra B$ will be a fibration (a surjective morphism with connected fibres) from a compact surface $S$ to a compact curve $B$. The {\em genus} of $f$, defined as the genus of any smooth fibre, will be denoted by $g$, and assumed to be at least 2. The {\em relative irregularity} of $f$ is by definition the difference $q_f = q\left(S\right) - g\left(B\right)$. According to Fujita's decomposition theorem (\cite{Fuj1},\cite{Fuj2}), $q_f$ coincides with the rank of the trivial part of the locally free sheaf $f_*\omega_{S/B}$.


\section{Preliminaries}

\label{sect-prel}

We will use some notions about infinitesimal deformations of curves, as well as some results on fibred surfaces developed in the previous work \cite{Xiao-1}.

\subsection{Infinitesimal deformations}

\label{subsect-inf-def}

Let $C$ be a smooth curve of genus $g \geq 2$. A first-order infinitesimal deformation of $C$ is a proper flat morphism $\caC \ra \Delta$ over the spectrum of the dual numbers $\Delta = \Spec \mC\left[\epsilon\right]/\left(\epsilon^2\right)$, such that the special fibre (over $0 = \Spec \mC\left(\epsilon\right)$) is isomorphic to $C$. A first-order infinitesimal deformation is determined (up to isomorphism) by its Kodaira-Spencer class $\xi \in H^1\left(C,T_C\right)$, defined as the extension class of the sequence defining the conormal bundle,
\beq \label{eq-xi-fibra}
0 \longra N_{C/\caC}^{\vee}  \cong T_{\Delta,0}^{\vee} \otimes \caO_C \longra \Omega_{\caC|C}^1 \longra \omega_C \longra 0,
\enq
after choosing an isomorphism $T_{\Delta,0}^{\vee} \cong \mC$. We will assume that the deformation is not trivial, that is $\caC \not \cong C \times \Delta$, or equivalently, $\xi \neq 0$.

Cup-product with $\xi$ gives a map
\beqn
\partial_{\xi} = \cup \, \xi: H^0\left(C,\omega_C\right) \longra H^1\left(C,\caO_C\right)
\enqn
that coincides with the connecting homomorphism in the exact sequence of cohomology of (\ref{eq-xi-fibra}).

\begin{df} \label{df-rk-xi}
The {\em rank} of $\xi$ is
\beqn
\rk \xi = \rk \partial_{\xi}.
\enqn
\end{df}

If $C$ is non-hyperelliptic, the map $H^1\left(C,T_C\right) \ra \Hom\left(H^0\left(C,\omega_C\right),H^1\left(C,\caO_C\right)\right)$ given by $\xi \mapsto \partial_{\xi}$ is injective, hence no information is lost when considering $\partial_{\xi}$ instead of $\xi$. However, if $C$ is hyperelliptic, the above map is not injective, and we may have $\rk \xi = 0$ even if $\xi \neq 0$. This exception is a manifestation of the failure of the infinitesimal Torelli Theorem for hyperelliptic curves.

From now on, until the end of the section, $D$ will denote an effective divisor on $C$ of degree $d$. We will also denote by $r = r(D) = h^0\left(C,\caO_C\left(D\right)\right)-1$ the dimension of its complete linear series.

\begin{df} \label{df-supp-infinit}
The deformation $\xi$ is {\em supported on} $D$ if and only if
\beqn
\xi \in \ker\left(H^1\left(C,T_C\right) \longra H^1\left(C,T_C\left(D\right)\right)\right),
\enqn
where the map is induced by the injection of line bundles $T_C \stackrel{+D}{\longra} T_C\left(D\right)$. Furthermore, if $\xi$ is not supported on any strictly smaller effective divisor $D' < D$, we say that $\xi$ is {\em minimally supported on} $D$.
\end{df}

As far as we are aware, the notion of supporting divisor was introduced in \cite{ColPir}, while the minimality was first considered in \cite{Gin}. The use of the word ``support'' has two motivations. On the one hand, $\xi$ is supported on $D$ if and only if it is the image of a Laurent tail of a meromorphic section $\eta \in H^0\left(D,T_C\left(D\right)_{|D}\right)$, which is obviously supported on $D$. On the other hand, $\xi$ is supported on $D$ if and only if, in the bicanonical space of $C$, the line $\mC\left\langle\xi\right\rangle$ corresponds to a point in the span of $D$.

If $D$ has the smallest degree among the divisors supporting $\xi$, then $\xi$ is minimally supported on $D$, but not conversely. Indeed, $\xi$ being minimally supported on $D$ means that it is not possible to remove some point of $D$ and still support $\xi$, but there is no reason for $D$ to have minimal degree.

One could equivalently define $\xi$ to be supported on the divisor $D$ if and only if the top row in the following pull-back diagram is split. 
\beq \label{diag-split}
\xymatrix{
\xi_D : \quad 0 \ar[r] & N_{C/\caC}^{\vee} \ar[rr] \ar@{=}[d] & & \caF_D \ar[rr] \ar@{^(->}[]+<0mm,-3mm>;[d] & & \omega_C\left(-D\right) \ar[r] \ar@{^(->}[]+<0mm,-3mm>;[d] \ar@/_/@{-->}[ll] & 0 \\
\xi : \quad 0 \ar[r] & N_{C/\caC}^{\vee} \ar[rr] & & \Omega_{\caC|C}^1 \ar[rr] & & \omega_C \ar[r] & 0
}
\enq
Indeed, the map $H^1\left(C,T_C\right) \ra H^1\left(C,T_C\left(D\right)\right)$ is naturally identified with the pull-back of extensions $\Ext^1_{\caO_C}\left(\omega_C,\caO_C\right) \ra \Ext^1_{\caO_C}\left(\omega_C\left(-D\right),\caO_C\right)$.

The following is a first relation between the rank of a deformation and the invariants of a supporting divisor.

\begin{lem} \label{lem-supp-inclu}
Suppose $\xi$ is supported on $D$. Then $H^0\left(C,\omega_C\left(-D\right)\right) \subseteq \ker \partial_{\xi}$. Hence
\beqn
\rk \xi \leq \deg D - r\left(D\right).
\enqn
\end{lem}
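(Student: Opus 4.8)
The plan is to read the statement directly off the pull-back diagram (\ref{diag-split}), using the naturality of connecting homomorphisms in cohomology. Since $\xi$ is supported on $D$, the top row $\xi_D$ of (\ref{diag-split}) splits, so the connecting homomorphism of its associated long exact sequence of cohomology,
\beqn
\partial_{\xi_D} : H^0\left(C,\omega_C\left(-D\right)\right) \longra H^1\left(C,N_{C/\caC}^{\vee}\right) ,
\enqn
is the zero map. On the other hand, the two rows of (\ref{diag-split}), together with the identity on $N_{C/\caC}^{\vee}$ and the vertical inclusions $\caF_D \hookrightarrow \Omega_{\caC|C}^1$ and $\omega_C\left(-D\right) \hookrightarrow \omega_C$, form a morphism of short exact sequences; functoriality of the connecting homomorphism then gives a commutative square
\beqn
\xymatrix{
H^0\left(C,\omega_C\left(-D\right)\right) \ar[r]^-{\partial_{\xi_D}} \ar[d] & H^1\left(C,N_{C/\caC}^{\vee}\right) \ar@{=}[d] \\
H^0\left(C,\omega_C\right) \ar[r]^-{\partial_{\xi}} & H^1\left(C,N_{C/\caC}^{\vee}\right)
}
\enqn
in which, after the identification $N_{C/\caC}^{\vee} \cong \caO_C$ (so that $\partial_{\xi}$ becomes the cup-product map $\cup\,\xi$), the left vertical arrow is exactly the inclusion on global sections induced by $T_C \stackrel{+D}{\longra} T_C\left(D\right)$, i.e. by $\omega_C\left(-D\right)\hookrightarrow\omega_C$. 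Since $\partial_{\xi_D}=0$, commutativity forces $\partial_{\xi}$ to vanish on the image of this inclusion, which is precisely the first assertion $H^0\left(C,\omega_C\left(-D\right)\right)\subseteq\ker\partial_{\xi}$.

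For the numerical bound I would simply count dimensions. From $H^0\left(C,\omega_C\left(-D\right)\right)\subseteq\ker\partial_{\xi}$ one gets
\beqn
\rk\xi = h^0\left(C,\omega_C\right) - \dim\ker\partial_{\xi} \leq g - h^0\left(C,\omega_C\left(-D\right)\right) ,
\enqn
and Riemann-Roch together with Serre duality give $h^0\left(C,\omega_C\left(-D\right)\right) = h^0\left(C,\caO_C\left(D\right)\right) - \deg D + g - 1 = g - \left(\deg D - r\left(D\right)\right)$, whence $\rk\xi\leq\deg D - r\left(D\right)$.

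I do not expect a genuine obstacle here; the work is entirely bookkeeping. The points deserving a little care are: checking that the splitting of $\xi_D$ really is equivalent to $\xi$ being supported on $D$ --- which is exactly what the diagram (\ref{diag-split}) and the identification of the support map with pull-back of extensions recorded just after it provide --- and verifying that the left vertical arrow in the square above is the natural inclusion on $H^0$ (rather than a twist of it), so that the containment lands in the claimed subspace $H^0\left(C,\omega_C\left(-D\right)\right)$.
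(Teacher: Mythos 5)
Your argument is correct and is essentially the same as the paper's: both use the splitting of $\xi_D$ in diagram (\ref{diag-split}) to conclude that sections of $\omega_C\left(-D\right)$ lift (through $\caF_D$ into $\Omega_{\caC|C}^1$) and hence lie in $\ker\partial_{\xi}$, followed by the same Riemann--Roch computation. The paper phrases the first step directly in terms of lifting sections rather than via naturality of the connecting homomorphism, but these are two formulations of the same observation.
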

\begin{proof}
The fact that $\xi_D$ is split implies that all the sections of $\omega_C\left(-D\right)$ lift to sections of $\Omega_{\caC|C}^1$, and hence belong to the kernel of $\partial_{\xi}$. The inequality is an easy consequence of Riemann-Roch, because
\beqn
\rk \xi = g - \dim \ker \partial_{\xi} \leq g - h^0\left(C,\omega_C\left(-D\right)\right) = g - \left(r\left(D\right)-d+g\right) = d - r\left(D\right).
\enqn
\end{proof}

We will need also a lower-bound on $\rk \xi$, which was first proved by Ginensky in \cite{Gin}. We include here a different (and shorter) proof, suggested to us by Pirola. Recall that the Clifford index of any divisor $D$ is defined as
\beqn
\Cliff\left(D\right) = \deg D - 2 r\left(D\right).
\enqn
Recall also that the Clifford index of the curve $C$ is
\beqn
\Cliff\left(C\right) = \min\left\{\Cliff\left(D\right)\,\vert\,h^0\left(C,\caO_C\left(D\right)\right),h^1\left(C,\caO_C\left(D\right)\right) \geq 2\right\}.
\enqn

\begin{thm} \label{thm-supp-lower-bound}
If $\xi$ is minimally supported on $D$, then
\beqn
\rk \xi \geq \deg D - 2r\left(D\right) = \Cliff\left(D\right).
\enqn
\end{thm}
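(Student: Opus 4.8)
The goal is a lower bound $\rk \xi \geq \deg D - 2r(D)$ when $\xi$ is minimally supported on $D$. Let me think about what tools I have. The upper bound (Lemma~\ref{lem-supp-inclu}) says $\ker \partial_\xi \supseteq H^0(\omega_C(-D))$, and the rank of $\partial_\xi$ is $g - \dim\ker\partial_\xi$. So I want to show $\dim \ker \partial_\xi \leq g - \deg D + 2r(D)$, i.e., $\dim\ker\partial_\xi \leq h^0(\omega_C(-D)) + r(D)$ (using Riemann–Roch: $h^0(\omega_C(-D)) = g - d + r(D)$, so $h^0(\omega_C(-D)) + r(D) = g - d + 2r(D)$). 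So I want: the kernel of $\partial_\xi$ is not too much bigger than $H^0(\omega_C(-D))$ — specifically, $\dim \ker\partial_\xi - h^0(\omega_C(-D)) \leq r(D) = h^0(\caO_C(D)) - 1$.

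**The mechanism: multiplication by sections of $\caO_C(D)$.**

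Here's the idea. Let $\omega \in H^0(\omega_C)$ with $\partial_\xi(\omega) = 0$, i.e., $\omega \cup \xi = 0$ in $H^1(\caO_C)$. I want to understand such $\omega$ modulo $H^0(\omega_C(-D))$. Consider the map $H^0(\caO_C(D)) \otimes H^0(\omega_C(-D)) \to H^0(\omega_C)$ given by multiplication. Actually, the cleaner approach: since $\xi$ is supported on $D$, we have the splitting $\xi_D$ of diagram~\eqref{diag-split}, and $\xi$ factors through $\xi_D$. The point of minimality is that $\xi$ is *not* supported on any $D' < D$; equivalently, for each point $p$ in the support of $D$, the deformation $\xi$ is not in the kernel of $H^1(T_C) \to H^1(T_C(D-p))$.

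Now think about it cohomologically. We have $\xi \in \im(H^1(T_C(-D)) \to H^1(T_C))$? No — supported on $D$ means $\xi \mapsto 0$ in $H^1(T_C(D))$, so from the sequence $0 \to T_C \to T_C(D) \to T_C(D)|_D \to 0$, $\xi$ is the image of some $\eta \in H^0(T_C(D)|_D)$ under the connecting map. Minimality means $\eta$ is supported on all of $D$ (no smaller divisor works). The rank $\rk\xi = \rk\partial_\xi$: the kernel of $\partial_\xi$ consists of $\omega \in H^0(\omega_C)$ such that the pairing $\langle \omega, \eta\rangle$, which lives in $H^0(\omega_C \otimes T_C(D)|_D) = H^0(\caO_C(D)|_D)$, dies after the connecting map $H^0(\caO_D(D)) \to H^1(\caO_C)$ — i.e., $\langle\omega,\eta\rangle$ extends to a global section of $\caO_C(D)$.

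**Main step and the key lemma.**

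So: $\ker\partial_\xi = \{\omega \in H^0(\omega_C) : \langle\omega,\eta\rangle \in \im(H^0(\caO_C(D)) \to H^0(\caO_D(D)))\}$. The restriction map $H^0(\caO_C(D)) \to H^0(\caO_D(D))$ has kernel $H^0(\caO_C)$ (constants, dimension $1$), so its image has dimension $r(D)$. Meanwhile $\langle -, \eta\rangle: H^0(\omega_C) \to H^0(\caO_D(D))$ has kernel exactly $H^0(\omega_C(-D))$ — this needs $\eta$ to be "nondegenerate" on $D$, which is where minimality gets used (if $\eta$ vanished somewhere on $D$ we could shrink $D$). Granting that, $\ker\partial_\xi$ is the preimage under an injective-mod-$H^0(\omega_C(-D))$ map of an $r(D)$-dimensional space, so
\[
\dim\ker\partial_\xi \leq h^0(\omega_C(-D)) + r(D) = (g - d + r(D)) + r(D) = g - d + 2r(D),
\]
giving $\rk\xi = g - \dim\ker\partial_\xi \geq d - 2r(D) = \Cliff(D)$, as desired.

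**Anticipated obstacle.**

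The delicate point is making precise the claim that $\langle-,\eta\rangle$ has kernel exactly $H^0(\omega_C(-D))$ — equivalently, that minimality of the support forces $\eta \in H^0(T_C(D)|_D)$ to be nowhere vanishing on $D$ (a unit in each local ring $\caO_{D,p}$), and then that the composite with the connecting map behaves well. One has to be careful when $D$ is non-reduced: "nowhere vanishing on $D$" for a section of an invertible sheaf on a fat point scheme needs the right interpretation, and the pairing $H^0(\omega_C) \to H^0(\caO_D(D))$ lands in a possibly non-reduced structure. I expect the cleanest route is to work with the exact sequence $0 \to \caO_C \to \caO_C(D) \to \caO_D(D) \to 0$ and the factorization of $\partial_\xi$ through it, tracking the snake lemma carefully; alternatively one can reduce to the reduced case by a semicontinuity/specialization argument. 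A subtlety to keep an eye on: whether one needs $r(D) \geq 1$ or any genericity — but since $D$ is an arbitrary effective divisor and the bound $\Cliff(D)$ can be negative (when $r(D)$ is large), the inequality should hold vacuously or trivially in the degenerate ranges, so no case analysis on $\Cliff(D)$'s sign should be required.
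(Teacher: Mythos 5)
Your approach is correct and reaches the same numerical estimate as the paper, but via a different (cohomological/Laurent-tail) route, and the one step you flag as delicate does in fact close cleanly. The paper argues sheaf-theoretically: since $\xi$ is supported on $D$, the inclusion $\omega_C(-D)\hra\omega_C$ lifts to $\iota_D\colon\omega_C(-D)\hra\Omega^1_{\caC|C}$, and the key claim is that minimality forces $\coker(\iota_D)$ to be \emph{torsion-free}. (If it had torsion $\caT$, one could saturate to get $\omega_C(-E)\hra\Omega^1_{\caC|C}$ with $E<D$, contradicting minimality; then $\coker(\iota_D)\cong\caO_C(D)$ by comparing determinants, and the resulting exact sequence $0\to\omega_C(-D)\to\Omega^1_{\caC|C}\to\caO_C(D)\to 0$ plus $h^0(\Omega^1_{\caC|C})=1+\dim\ker\partial_\xi$ gives the bound.) Your version instead lifts $\xi$ to $\eta\in H^0\left(T_C(D)|_D\right)$ via the connecting map of $0\to T_C\to T_C(D)\to T_C(D)|_D\to 0$, and shows $\partial_\xi$ factors as $\langle-,\eta\rangle$ followed by the connecting map of $0\to\caO_C\to\caO_C(D)\to\caO_D(D)\to0$; the dimension count is then identical. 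The gap you identify — that minimality forces $\eta$ to be a nowhere-vanishing section of the invertible $\caO_D$-module $T_C(D)|_D$, even for non-reduced $D$ — is genuinely there in your sketch but is straightforward to fill: for any $p\in\Supp(D)$, the subsheaf $T_C(D-p)/T_C\subset T_C(D)/T_C$ is precisely the kernel of $T_C(D)|_D\to T_C(D)\otimes k(p)$, so $\eta\otimes k(p)=0$ if and only if $\eta$ lies in the image of $H^0\left(T_C(D-p)/T_C\right)$, which would make $\xi$ supported on $D-p$; and by Nakayama this pointwise non-vanishing is exactly what makes $\eta$ a generator, hence $\cdot\eta\colon\omega_C|_D\to\caO_D(D)$ an isomorphism, giving $\ker\langle-,\eta\rangle=H^0\left(\omega_C(-D)\right)$. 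The two arguments use minimality in dual ways (torsion of the cokernel versus vanishing of the Laurent tail) and are essentially equivalent in content; the paper's sheaf-level formulation avoids any explicit discussion of non-reduced points, which is the chief thing it buys you.
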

\begin{proof}
Since $\xi$ is supported on $D$, the inclusion $\omega_C\left(-D\right) \hra \omega_C$ factors through $\iota_D: \omega_C\left(-D\right) \hra \Omega_{\caC|C}^1$.

\textit{Claim:} If $D$ supports $\xi$ minimally, the cokernel $\caK_D$ of $\iota_D$ is torsion-free.

Assuming the claim, the proof can be completed as follows. On the one hand, comparing determinants one has $\caK_D \cong \caO_C\left(D\right)$, giving the exact sequence of sheaves
\beqn
0 \longra \omega_C\left(-D\right) \longra \Omega_{\caC|C}^1 \longra \caO_C\left(D\right) \longra 0,
\enqn
from which the inequality
\beq \label{eq-a}
h^0\left(C,\Omega_{\caC|C}^1\right) \leq h^0\left(C,\omega_C\left(-D\right)\right) + h^0\left(C,\caO_C\left(D\right)\right)
\enq
follows. On the other hand, from the exact sequence of cohomology of $\xi$, one gets
\beq \label{eq-b}
g - \rk \xi = \dim \ker \partial_{\xi} = h^0\left(C,\Omega_{\caC|C}^1\right) - 1.
\enq
Combining inequality (\ref{eq-a}) and equality (\ref{eq-b}) with Riemann-Roch, one finally obtains
\beqn
g - \rk \xi \leq h^0\left(C,\omega_C\left(-D\right)\right) + h^0\left(C,\caO_C\left(D\right)\right) - 1 = 2 r\left(D\right) - \deg D + g.
\enqn

\textit{Proof of the claim:}
We will show in fact that if $\caK_D$ has torsion, then $D$ does not support $\xi$ minimally. Indeed, if $\caT \neq 0$ is the torsion subsheaf of $\caK_D$, there is a line bundle $\caM$ such that
\beqn
0 \longra \caM \longra \Omega_{\caC|C}^1 \longra \caK_D/\caT \longra 0 \quad \text{and} \quad 0 \longra \omega_C\left(-D\right) \longra \caM \longra \caT \longra 0.
\enqn
The image of the composition $\phi: \caM \hra \Omega_{\caC|C}^1 \ra \omega_C$ contains $\omega_C\left(-D\right)$ by construction. Therefore $\phi$ is injective, and gives an isomorphism $\caM \cong \omega_C\left(-E\right)$ for some $0 \leq E < D$. Since $\caM \hra \omega_C$ factors through $\Omega_{\caC|C}^1$, this contradicts the minimality of $D$, as wanted.
\end{proof}

\begin{ex}
There exist examples such that $\rk\xi = \Cliff\left(D\right)$. Let for example $C$ be a general fibre of a fibration $f$ such that $q_f = \frac{g}{2}+1 > \frac{g+1}{2}$ (as those constructed in \cite{Alb-Pir}), and let $\xi$ be its first-order deformation induced by $f$. As a consequence of the proof of Theorem \ref{main-thm}, any divisor $D$ supporting $\xi$ must be movable and will satisfy $\rk \xi = \Cliff\left(D\right)$.
Furthermore, such examples are not only infinitesimal deformations, but actual families of curves with the same property. We have not been able to find more explicit examples. Although Ginensky claims to classify these cases in \cite{Gin} Theorem 2.5, we cannot follow this part of his proof.
\end{ex}

\subsection{Fibred surfaces}

\label{subsect-fibr-surf}

We now recall some results on fibred surfaces proved in the previous work \cite{Xiao-1} of the second named author. Let $f: S \ra B$ be a fibration of genus $g$ and relative irregularity $q_f$. We say that $f$ is {\em isotrivial} if all the smooth fibres are isomorphic. For any smooth fibre $C_b$, the kernel of the restriction map $r_b: H^0\left(S,\Omega_S^1\right) \ra H^0\left(C_b,\omega_{C_b}\right)$ is exactly $f^*H^0\left(B,\omega_B\right)$. Therefore, there is an injection
\beq \label{inj-V_f}
V_f := H^0\left(S,\Omega_S^1\right)/f^*H^0\left(B,\omega_B\right) \longhra H^0\left(C_b,\omega_{C_b}\right)
\enq
which implies the inequality $q_f \leq g$.

\begin{rmk} \label{rmk-kernel}
Although it is well known, we would like to sketch a proof of the equality $\ker r_b = f^*H^0\left(B,\omega_B\right)$, using a construction that will appear again in the last section. The ideas are actually taken from \cite{Deb-Beau} and \cite{Pir-Xiao}. The basic fact is that the image of the Jacobian of a smooth fibre $C_b$ in $\Alb\left(S\right)$ is independent of $b \in B$ (up to translation), and is precisely $A = \ker\left(\Alb\left(S\right)\ra J\left(B\right)\right)$. We have therefore an exact sequence of Abelian varieties
$$J\left(C_b\right) \longra \Alb\left(S\right) \longra J\left(B\right) \longra 0,$$
and looking at the maps on the cotangent spaces at the origin we obtain the exact sequence of vector spaces
$$0 \longra H^0\left(B,\omega_B\right) \stackrel{f^*}{\longra} H^0\left(S,\Omega_S^1\right) \stackrel{r_b}{\longra} H^0\left(C_b,\omega_{C_b}\right),$$
hence the wanted inclusion
$$V_f:=H^0\left(S,\Omega_S^1\right)/f^*H^0\left(B,\omega_B\right) = H^0\left(A,\Omega_A^1\right) \hookrightarrow H^0\left(C_b,\omega_{C_b}\right).$$
\end{rmk}

For any finite map $\pi: B' \ra B$, let $S' = \widetilde{S \times_B B'}$ be the minimal desingularization of the fibred product, and $f': S' \ra B'$ the induced fibration. The fibres of $f'$ obviously have the same genus as the fibres of $f$, but for the relative irregularity only the inequality $q_{f'} \geq q_f$ can be proved (which might be strict). Indeed, for any $b \in B$ where $\pi$ is not ramified, the injection (\ref{inj-V_f}) factors as
\beqn
V_f \longra V_{f'} \longhra H^0\left(C_b,\omega_{C_b}\right),
\enqn
which forces the first map to be injective, and hence $q_f \leq q_{f'}$.

For any smooth fibre $C_b$, denote by $\xi_b \in H^1\left(C_b,T_{C_b}\right)$ the class of the first order deformation induced by $f$. 

\begin{df}[\cite{Xiao-1} Definition 2.3] \label{df-supp-global}
Let $D \subset S$ be an effective divisor. The fibration $f$ is {\em supported on} $D$ if for a general $b \in B$, $\xi_b$ is supported on $D_{|C_b}$.
\end{df}

Note that this definition is local around the smooth fibres. As a consequence, if $f$ is supported on $D$ and we perform a change of base $\pi: B' \ra B$ as above, then $f'$ is supported on $\pi'^*D \subset S'$ (where $\pi': S' \ra S$ is the induced map between the surfaces).

The existence of supporting divisors is investigated in \cite{Xiao-1}. For our current purposes, the most useful result is the following.

\begin{thm}[\cite{Xiao-1} Corollary 3.2] \label{thm-supp-div}
If $q_f > \frac{g+1}{2}$, then after a base change $B' \ra B$, there is a divisor $D \subset S'$ supporting $f': S' \ra B'$ and such that $D \cdot C_b < 2g-2$ for any fibre $C_b$. Furthermore, if $f$ is relatively minimal with reduced fibres, then $D \cdot C \leq 2g\left(C\right)-2-C^2$ for any component $C$ of a fibre.
\end{thm}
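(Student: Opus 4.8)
The "final statement" in the excerpt is Theorem~\ref{thm-supp-div} (Corollary 3.2 of \cite{Xiao-1}), a quoted result about the existence of a supporting divisor with bounded intersection with the fibres. I sketch how I would prove it.

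\medskip

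The plan is to build the supporting divisor from the linear-algebra structure of the first-order deformation on a general fibre, and then spread it out over the base. First I would note that the hypothesis $q_f > \frac{g+1}{2}$ is precisely the range in which the trivial (flat) subsheaf $W \subseteq f_*\omega_{S/B}$ of rank $q_f$, coming from Fujita's decomposition, is large. Restricting to a general fibre $C = C_b$, the image $V_f \hookrightarrow H^0(C,\omega_C)$ of dimension $q_f$ lies in $\ker\partial_{\xi_b}$: this is the classical fact that the periods of the forms pulled back from $\Alb(S)$ are constant along the family, so they are killed by cup product with the Kodaira--Spencer class (this is the content of Remark~\ref{rmk-kernel} together with the definition of $\partial_\xi$ as a connecting map). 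Hence $\rk\xi_b \le g - q_f < g - \frac{g+1}{2} = \frac{g-1}{2}$ for general $b$.

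Next I would extract a supporting divisor on $C$ from this rank bound. By Lemma~\ref{lem-supp-inclu} read in reverse — or more precisely by the standard fact that a deformation of small rank is supported on a divisor $D$ with $H^0(\omega_C(-D)) \subseteq \ker\partial_{\xi_b}$ and $\deg D - r(D) = \rk\xi_b$ — one produces an effective $D_b$ on $C_b$ with $\deg D_b \le g - 1 + \rk\xi_b < g - 1 + \frac{g-1}{2}$, in any case $\deg D_b < 2g - 2$ after being a bit careful with the bookkeeping (here one chooses $D_b$ so that $\omega_C(-D_b)$ is exactly, or contains, a subspace of $\ker\partial_{\xi_b}$ of the right dimension; minimality as in Definition~\ref{df-supp-infinit} lets one trim $D_b$). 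The delicate point is uniformity: one needs the $D_b$ to fit together into a \emph{global} divisor $D$ on $S$ (after a base change), i.e. to vary algebraically with $b$. This is where the base change $B' \to B$ enters: the locus parametrising the relevant sub-linear-systems of $\omega_{C_b}$ is a (relative) Grassmannian bundle or a suitable Hilbert scheme over the open part of $B$, and after pulling back along a map $B' \to B$ that dominates an irreducible component of a multisection of that bundle, one gets a genuine divisor $D \subset S'$ restricting to (a divisor linearly equivalent to, hence with the same intersection number as) the $D_b$ on each general fibre. One must then check that the supporting property, being a closed condition that holds on general fibres, propagates, which is exactly the locality remark following Definition~\ref{df-supp-global}.

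The main obstacle, and the part I would expect to require the most care, is precisely this globalisation/base-change step together with controlling the intersection numbers $D \cdot C$ \emph{for every} fibre $C$, not just the general one — and, in the relatively minimal reduced-fibre case, the sharper bound $D \cdot C \le 2g(C) - 2 - C^2$ for each component $C$ of a (possibly singular) fibre. For this last refinement I would argue by adjunction: $C^2 + C\cdot(K_S + (\text{other components})) = 2g(C) - 2$ expresses $2g(C)-2-C^2$ in terms of the relative canonical class, and one shows $D$ can be chosen inside $|K_{S/B} - (\text{correction})|$ so that $D\cdot C$ does not exceed the contribution of $K_{S/B}$ to that component; semicontinuity of $h^0$ and the flatness of $f$ then force the bound on special fibres from the bound on the general one. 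Since the theorem is quoted from \cite{Xiao-1}, in the paper itself this is simply cited; a self-contained proof would need the full deformation-theoretic machinery of that reference, and the honest answer is that the globalisation is the crux while the fibrewise linear algebra above is routine.
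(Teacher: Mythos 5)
Your fibrewise linear algebra is fine up to the rank bound: $V_f\subseteq\ker\partial_{\xi_b}$ gives $\rk\xi_b\le g-q_f<\frac{g-1}{2}$. The genuine gap is the next step, where you extract a supporting divisor directly from the rank bound by ``reading Lemma~\ref{lem-supp-inclu} in reverse'' and invoking ``the standard fact that a deformation of small rank is supported on a divisor $D$ with $H^0(\omega_C(-D))\subseteq\ker\partial_{\xi_b}$ and $\deg D - r(D)=\rk\xi_b$.'' This is not a standard fact, and the implication you want does not hold. Being supported on $D$ is the condition $\xi\perp H^0(\omega_C^{\otimes 2}(-D))$ under Serre duality, whereas $H^0(\omega_C(-D))\subseteq\ker\partial_\xi$ is the weaker condition $\xi\perp H^0(\omega_C(-D))\cdot H^0(\omega_C)$; the multiplication image $H^0(\omega_C(-D))\cdot H^0(\omega_C)$ is in general a proper subspace of $H^0(\omega_C^{\otimes 2}(-D))$, so the converse of Lemma~\ref{lem-supp-inclu} fails. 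Nor is it true that one can choose $D$ with $H^0(\omega_C(-D))$ equal to $\ker\partial_\xi$: the subspaces arising as $H^0(\omega_C(-D))$ for effective $D$ form a very special family, and $\ker\partial_\xi$ need not be one of them. If such a shortcut worked, the entire adjoint-image machinery of Collino--Pirola and of \cite{Xiao-1} would be unnecessary.

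The paper's route (sketched right after the statement of Theorem~\ref{thm-supp-div}, and carried out in \cite{Xiao-1}) is genuinely different. The hypothesis $q_f>\frac{g+1}{2}$ is not used to bound the rank and invert an inequality; it is used to find, near a general fibre, a $2$-dimensional subspace $W\subseteq V_f\subseteq\ker\partial_{\xi_b}$ whose \emph{adjoint class} vanishes, and then the Adjoint Theorem of \cite{ColPir} (Theorem~1.1.8) produces the supporting divisor as the base locus of $\left|W\right|\subseteq\left|\omega_{C_b}\right|$. This construction also makes both of the difficulties you flag disappear: since $W$ comes from global $1$-forms on $S$, the base loci glue automatically into the relative base divisor of a subsystem of the relative canonical (no Grassmannian/Hilbert-scheme globalisation is needed, only a base change to make the relevant multisections into sections); and the degree bounds are immediate, since $D_{|C_b}$ is the base divisor of a $2$-dimensional subsystem of $\left|\omega_{C_b}\right|$ (so $h^0(\omega_{C_b}(-D))\ge 2$, forcing $\deg D< 2g-2$), while the refined bound $D\cdot C\le 2g(C)-2-C^2$ comes from $D$ sitting inside the relative canonical class together with adjunction. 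In short, you have replaced the one nontrivial ingredient — the Adjoint Theorem and the way the inequality $q_f>\frac{g+1}{2}$ feeds into it — with an unjustified converse of Lemma~\ref{lem-supp-inclu}, and the rest of your sketch inherits the gap.
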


The proof of this result uses {\em adjoint images}, whose definitions and main properties will be recalled in the last section. Roughly speaking, the condition $q_f > \frac{g+1}{2}$ implies that locally around every smooth fibre $C_b$ there are two linearly independent holomorphic forms with vanishing adjoint image, and the (relative) base divisor of the linear system they span is precisely the supporting divisor $D$. Furthermore, according to the forthcoming Remark \ref{rmk-ramif-alb}, the divisor $D$ in Theorem \ref{thm-supp-div} contains the components of the ramification divisor of the Albanese map of $S$ that are not contained in fibres.

The proof of Theorem \ref{main-thm} splits into two cases, depending on whether the restriction of a supporting divisor to a general fibre is rigid or not. In the rigid case, there is a strong result on the structure of the fibred surface which will be very useful:

\begin{thm}[\cite{Xiao-1} Theorem 2.1] \label{thm-struc}
Let $S$ be a compact surface, and $f: S \ra B$ a stable fibration by curves of genus $g$ and relative irregularity $q_f =q\left(S\right) - g\left(B\right) \geq 2$. Suppose $f$ is supported on an effective divisor $D$ without components contained in fibres. Suppose also that $D \cdot C \leq 2g\left(C\right)-2-C^2$ for any component $C$ of a fibre, and that $h^0\left(C_b,\caO_{C_b}\left(D_{|C_b}\right)\right) = 1$ for some smooth fibre $C_b$. Then there is another fibration $h: S \ra B'$ over a curve of genus $g\left(B'\right) = q_f$. In particular $S$ is a covering of the product $B \times B'$, and both surfaces have the same irregularity.
\end{thm}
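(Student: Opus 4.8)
The plan is to build the second fibration $h \colon S \ra B'$ out of the space of $1$-forms $V_f = H^0(S,\Omega^1_S)/f^*H^0(B,\omega_B) = H^0(A,\Omega^1_A)$, where $A = \ker(\Alb(S) \ra J(B))$ as in Remark~\ref{rmk-kernel}, and then to extract the covering statement and the equality of irregularities formally. The starting observation is that, for a general fibre $C_b$, one has $V_f \subseteq \ker \partial_{\xi_b}$: a holomorphic $1$-form on $S$ restricts on $C_b$ to a form which already extends to a section of $\Omega^1_{\caC|C_b}$ over the first-order neighbourhood, hence lies in the kernel of the connecting homomorphism of (\ref{eq-xi-fibra}). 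The rigidity hypothesis $h^0(C_b,\caO_{C_b}(D_{|C_b})) = 1$ will be used to describe $\ker\partial_{\xi_b}$ completely, and it is this description that makes a relative Castelnuovo--de Franchis argument run.

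First I would pin down $\ker\partial_{\xi_b}$. Write $D_b = D_{|C_b}$ and $d = \deg D_b = D\cdot C_b$ (which is $\le 2g-2$, since a smooth fibre satisfies the component bound with $C_b^2 = 0$). Passing to a sub-divisor of $D$ that minimally supports $\xi_b$ preserves all the hypotheses — a sub-divisor of a rigid divisor is rigid and the intersection numbers only drop — so I may assume $D_b$ minimally supports $\xi_b$. Then the torsion-freeness established inside the proof of Theorem~\ref{thm-supp-lower-bound} gives the exact sequence $0 \ra \omega_{C_b}(-D_b) \ra \Omega^1_{\caC|C_b} \ra \caO_{C_b}(D_b) \ra 0$, whence $h^0(C_b,\Omega^1_{\caC|C_b}) \le h^0(C_b,\omega_{C_b}(-D_b)) + 1 = (g-d)+1$; combining with (\ref{eq-b}) yields $\rk\xi_b \ge d$, while Lemma~\ref{lem-supp-inclu} yields $\rk\xi_b \le d - r(D_b) = d$. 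Hence $\rk\xi_b = d$, $\dim\ker\partial_{\xi_b} = g-d = h^0(C_b,\omega_{C_b}(-D_b))$, and therefore $\ker\partial_{\xi_b} = H^0(C_b,\omega_{C_b}(-D_b)) \supseteq V_f$. In particular every form of $V_f$ vanishes along $D_b$, and the morphism $C_b \ra A$ (whose image generates $A$) has $D_b$ in the base divisor of its Gauss system $|V_f|$.

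Next I would run the infinitesimal (relative) Castelnuovo--de Franchis machinery of \cite{Xiao-1}, whose adjoint-image apparatus is recalled in the last section. Since $V_f \subseteq \ker\partial_{\xi_b}$ has dimension $q_f \ge 2$ and sits inside $H^0(C_b,\omega_{C_b}(-D_b))$, the lifts of its members to $\Omega^1_{\caC|C_b}$ are again sections of $\omega_{C_b}(-D_b)$, so the adjoint image of $V_f$ relative to $\xi_b$ vanishes. The theorem then produces, for general $b$, a morphism $p_b\colon C_b \ra \overline{C}_b$ with $V_f \subseteq p_b^*H^0(\overline{C}_b,\omega_{\overline{C}_b})$, with $\xi_b$ induced by a deformation of $\overline{C}_b$, and in particular $g(\overline{C}_b) \ge q_f$. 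As $b$ varies these data move holomorphically — morally the $p_b$ are the Stein factorisation of $S \xrightarrow{\alb_S} \Alb(S) \dra A$ — and, using that $D$ has no components in fibres, the bound on $D$ along fibre components, and the stability of $f$, a monodromy/rigidity argument (algebraic equivalence of $\overline{C}_b \subset A$ over $B$ together with a Matsusaka--Ran type bound on its class) identifies $\overline{C}_b$, up to translation in $A$, with a fixed curve $B' \subset A$ and assembles the $p_b$ into a fibration $h\colon S \ra B'$ with $h^*H^0(B',\omega_{B'})$ mapping onto $V_f$.

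The rest is bookkeeping. One has $g(B') = g(\overline{C}_b) \ge q_f$, and since $h^*H^0(B',\omega_{B'})$ and $f^*H^0(B,\omega_B)$ span complementary subspaces of $H^0(S,\Omega^1_S)$ their dimensions add up to at most $q(S) = q_f + g(B)$, forcing $g(B') = q_f$. The morphism $(f,h)\colon S \ra B\times B'$ is generically finite: otherwise $h$ would be constant on a general, hence on every, fibre of $f$, so $h$ would factor through $f$, contradicting $h^*H^0(B',\omega_{B'}) = V_f \ne 0$; thus $S$ is a covering of $B\times B'$. Finally, on cotangent spaces the induced map $\Alb(S) \ra J(B)\times J(B')$ corresponds to the isomorphism $H^0(B,\omega_B) \oplus H^0(B',\omega_{B'}) \xrightarrow{\sim} H^0(S,\Omega^1_S)$ (the dimensions being $g(B)+q_f = q(S)$), so it is an isogeny and $q(S) = g(B\times B')$, i.e.\ both surfaces have the same irregularity. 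The hard part will be the third step: making precise that the rigidity $h^0(\caO_{C_b}(D_b)) = 1$ genuinely forces the adjoint image of all of $V_f$ to vanish, and carrying out the monodromy/rigidity argument that turns the fibre-wise quotients $p_b$ into an honest fibration over a curve of genus exactly $q_f$; once this is in place the first, second and fourth steps are routine.
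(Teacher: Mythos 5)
The paper you are working from does not actually prove this statement: it is imported verbatim as \cite{Xiao-1}, Theorem~2.1, and the only indication of the proof's strategy is the remark at the start of Section~\ref{sect-local} that ``the proof of Theorem~\ref{thm-struc} in \cite{Xiao-1} relies on the classical Castelnuovo--de~Franchis result on fibrations, for which the compactness of the surface is crucial.'' Measured against that, your first two steps are sound and in the right spirit: the identification $\ker\partial_{\xi_b} = H^0\bigl(C_b,\omega_{C_b}(-D_b)\bigr) \supseteq V_f$ from rigidity plus Lemma~\ref{lem-supp-inclu} and Theorem~\ref{thm-supp-lower-bound}, and the observation that the splitting of $\xi_D$ forces every pair of elements of $V_f$ to lift to sections of a common line sub-bundle of $\Omega^1_{\caC|C_b}$, hence to have vanishing adjoint class, are exactly the computations the paper itself makes in the proof of Proposition~\ref{pro-altern}.

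The genuine gap is in your third step. You assert that the vanishing of the adjoint class ``produces, for general $b$, a morphism $p_b\colon C_b \ra \overline{C}_b$ with $V_f \subseteq p_b^*H^0(\overline{C}_b,\omega_{\overline{C}_b})$, with $\xi_b$ induced by a deformation of $\overline{C}_b$,'' and then propose to glue the $p_b$ into a fibration by a monodromy/Matsusaka--Ran argument. No such fibre-wise production step is available: the Adjoint Theorem of Collino--Pirola (the only adjoint-image result recalled in the paper) says only that $\xi_b$ is supported on the base divisor of $|W|$ --- it outputs a divisor, not a quotient morphism, and certainly not the statement that $\xi_b$ comes from a deformation of a quotient curve. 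There is no ``Castelnuovo--de~Franchis for a single curve''; the wedge-vanishing phenomenon is genuinely two-dimensional. The clean route, which is the one the cited proof takes, is to work globally on the compact surface $S$: use the fibre-wise lifting you already have to show that any two linearly independent $\eta_1,\eta_2 \in V_f \subseteq H^0(S,\Omega^1_S)$ satisfy $\eta_1 \wedge \eta_2 = 0$ in $H^0(S,\omega_S)$ (this is where both compactness and the rigidity of $D_{|C_b}$ enter), and then apply the classical Castelnuovo--de~Franchis theorem to obtain the fibration $h\colon S \ra B'$ with $V_f \subseteq h^*H^0(B',\omega_{B'})$ in one stroke. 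That eliminates both the fibre-wise quotient you cannot construct and the gluing argument you only sketch; your bookkeeping in step four (genus of $B'$, the covering $(f,h)\colon S \ra B\times B'$, equality of irregularities) is then fine as written.
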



\section{The Main Theorem}

\label{sect-main-thm}

This section is devoted to the proof of Theorem \ref{main-thm}. Recall that $f: S \ra B$ denotes a fibration of genus $g \geq 2$, relative irregularity $q_f$ and Clifford index $c_f$. The aim is to prove that if $f$ is not isotrivial, then $q_f \leq g-c_f$.

\begin{proof}[Proof of Theorem \ref{main-thm}]
Suppose, looking for a contradiction, that $f: S \ra B$ is non-isotrivial and that $q_f > g - c_f$. In particular, since $c_f \leq \left\lfloor\frac{g-1}{2}\right\rfloor$, we have $q_f > \frac{g+1}{2}$. Hence we can apply Theorem \ref{thm-supp-div} and assume, after a change of base, that $f$ still satisfies $q_f > g - c_f$ and is supported on a divisor $D \subset S$ such that $D \cdot C < 2g-2$ for any fibre $C$. Note that the inequality $q_f > \frac{g+1}{2}$ combined with $g \geq q_f$ implies that $g \geq 2$.

We consider now two cases:

\begin{enumerate}
\item[Case 1:] The divisor $D$ is relatively rigid, that is $h^0\left(C,\caO_C\left(D\right)\right) = 1$ for some smooth fibre $C=C_b$. In this case, after a further base change, we can assume that the fibration is stable and the divisor $D$ satisfies
$$D \cdot C \leq 2g\left(C\right)-2-C^2$$
for any component $C$ of a fibre. We can now apply Theorem \ref{thm-struc} to obtain a new fibration $h: S \ra B'$ over a curve of genus $g\left(B'\right) = q_f$. Let $\phi: C \ra B'$ be the restriction of $h$ to the smooth fibre $C$. Applying Riemann-Hurwitz we obtain
\beqn
2g-2 \geq \deg\phi \, (2q_f-2).
\enqn
At the beginning of the proof we obtained that $q_f > \frac{g+1}{2}$, so $2q_f-2 > g-1$ and thus
\beqn
2(g-1) > \deg\phi \, (g-1).
\enqn
It follows that $\deg\phi=1$, hence smooth fibre is isomorphic to $B'$ and $f$ is isotrivial.

\item[Case 2:] The divisor $D$ moves on any smooth fibre, i.e. $h^0\left(C_b,\caO_{C_b}\left(D\right)\right) \geq 2$ for every regular value $b \in B$. 

After a further change of base, we may assume that $D$ consists of $d$ sections of $f$ (possibly with multiplicities), and the new fibration is still supported on $D$. Then we can replace $D$ by a minimal subdivisor $D' \leq D$ such that $\xi$ is still supported on $D'$. Since the components of $D$ are sections of $f$, this implies that for general $b \in B$, the deformation $\xi_b$ is {\em minimally} supported on $D_{|C_b}$. Note that this might not be true if the supporting divisors were not a union of sections, as different points of $D_{|C_b}$ lying on the same irreducible component of $D$ may be redundant.

If this new $D$ is rigid on the general fibres, the proof finishes as in Case 1. Otherwise, if it still holds $h^0\left(C_b,\caO_{C_b}\left(D\right)\right) \geq 2$ for general $b \in B$, we may use Theorem \ref{thm-supp-lower-bound} to obtain
\beq \label{ineq-final-proof}
\rk \xi_b \geq \Cliff\left(D_{|C_b}\right) = c_f.
\enq
Since $V_f \subseteq \ker \partial_{\xi_b} = K_{\xi_b}$, then
\beqn
q_f = \dim V_f \leq \dim K_{\xi_b} = g - \rk \xi_b \leq g-c_f,
\enqn
contradicting our very first hypothesis.
\end{enumerate}
\end{proof}

\begin{rmk}
Note that, whenever there exists a relatively rigid divisor $D$ supporting the fibration, the inequality $q_f > \frac{g+1}{2}$ is sufficient to prove that the fibration $f$ is isotrivial (together with the structure Theorem \ref{thm-struc}), while the stronger inequality $q_f > g - c_f$ is used only if there is no such a $D$ (even allowing arbitrary base changes). Hence, all possible counterexamples to Xiao's original conjecture must fall into this second case.
\end{rmk}

\begin{rmk}
Note that the proof of the second case is indeed strictly infinitesimal, which might be a reason for the inequality to be weaker than expected.
\end{rmk}

\begin{rmk}
Although in general our bound is better than the general one (\ref{bound-5/6}) proved by Xiao, for small $c_f$ our Theorem is worse. As a extremal case, if the general fibres are hyperelliptic, $c_f=0$ and Theorem \ref{main-thm} has no content at all. But in this special case, the strong inequality $q_f \leq \frac{g+1}{2}$ was proved by Cai in \cite{Cai} using some results of Pirola \cite{Pir-Kum} about rigidity of rational curves on Kummer varieties. The same inequality has been recently proved, with very different methods, by Lu and Zuo in \cite{Lu-Zuo}.
\end{rmk}


\section{A local approach}

\label{sect-local}

The proof of Theorem \ref{thm-struc} in \cite{Xiao-1} relies on the classical Castelnuovo-de Franchis result on fibrations, for which the compactness of the surface is crucial. We present here a different way, with more local flavor, to deal with the rigid case in the proof of Theorem \ref{main-thm}. We devote this last section to the study of this different approach, which uses the theory of adjoint images and the Volumetric Theorem to be recalled now. Incidentally, the adjoint images were already a fundamental tool in the proof of Theorem \ref{thm-supp-div}, which gives a further reason to include here a short review of them. Although the theory can be developed for varieties of any dimension (see for example the work of Pirola an Zucconi \cite{PZ}), we will recall only the simplest case of curves, in which Collino and Pirola \cite{ColPir} used them for the first time and which is enough for our objective.

Let $C$ be a smooth curve of genus $g \geq 2$, and $0 \neq \xi \in H^1\left(C,T_C\right)$ a non-trivial first order deformation, corresponding to the extension
\beqn
0 \longra \caO_C \longra \caE \longra \omega_C \longra 0.
\enqn
Suppose that
$$K_{\xi} = \ker\left(H^0\left(C,\omega_C\right) \stackrel{\partial_{\xi} = \cup \, \xi}{\longra} H^1\left(C,\caO_C\right)\right) = \im\left(H^0\left(C,\caE\right)\longra H^0\left(C,\omega_C\right)\right)$$
has dimension at least 2, and let $\eta_1,\eta_2 \in K_{\xi}$ be two linearly independent 1-forms. Take $s_i \in H^0\left(C,\caE\right)$ arbitrary preimages of the $\eta_i$, and let $w \in H^0\left(C,\omega_C\right)$ be the 1-form corresponding to $s_1 \wedge s_2$ by the natural isomorphism $\bigwedge^2 \caE \cong \omega_C$. The class $[w]$ of $w$ modulo the span $W$ of $\left\{\eta_1,\eta_2\right\}$ is well-defined, independently of the choice of the preimages $s_i$.

\begin{df}
The class $[w] \in H^0\left(C,\omega_C\right) / W$ is the {\em adjoint class} of $\left\{\eta_1,\eta_2\right\}$.
\end{df}

Changing $\left\{\eta_1,\eta_2\right\}$ by another basis of $W$ amounts to multiply $[w]$ by the determinant of the change of basis. Therefore, whether $[w]$ vanishes or not is an intrinsical property of the subspace $W$, and not only of the chosen basis. Moreover, the Adjoint Theorem (\cite{ColPir} Th. 1.1.8) says that if $[w]=0$, then the deformation $\xi$ is supported on the base divisor of the linear system $\left|W\right| \subseteq \left|\omega_C\right|$. This is the main result used in the proof of Theorem \ref{thm-supp-div} to obtain supporting divisors on each fibre.

\begin{rmk} \label{rmk-ramif-alb}
In general, not much can be said of an arbitrary supporting divisor $D$, but those provided by the Adjoint Theorem enjoy some special geometric properties. Assume for example that there is a morphism $\phi: C \ra A$ from the curve to an Abelian variety $A$ such that $W \subseteq \phi^*H^0\left(A,\Omega_A^1\right)$, which is indeed the case if $C$ is a fibre of $f: S \ra B$, $A=\ker\left(\Alb\left(S\right)\ra J\left(B\right)\right)$, and $W$ is generated by sections coming from 1-forms on $S$ (see Remark \ref{rmk-kernel}). Then the base divisor of $W$, and in particular any supporting divisor deduced from the Adjoint Theorem, contains the ramification divisor of $\phi$.
\end{rmk}

We will use another result about adjoint images: the Volumetric Theorem, which we introduce now in the case of a family of curves. Let $\pi: \caC \ra U$ be a smooth family of curves over an open disc $U$, and for every $u \in U$, let $\xi_u$ be the induced first order defomation of the fibre $C_u$. Let $A$ be an Abelian variety, and let $\Phi: \caC \ra A \times U$ be a morphism such that $p_2 \circ \Phi = \pi$ (where $p_2$ denotes the second projection of the product $A \times U$), that is, a family of morphisms $\phi_u : C_u \ra A$ from the fibres of $\pi$ onto a fixed Abelian variety $A$. Given a 2-dimensional subspace $W \subseteq H^0\left(A,\Omega_A^1\right)$, denote by $W_u = \phi_u^*W \subseteq H^0\left(C_u,\omega_{C_u}\right)$ its pull-back to $C_u$. Since the elements of $W_u$ extend to all the fibres by construction, $W_u$ is contained in the kernel of $\partial_{\xi_u}$, so it is possible to define $[w_u]$, the adjoint class of $W_u$ corresponding to some chosen basis of $W$.

\begin{thm}[Volumetric Theorem(\cite{PZ}, Theorem 1.5.3)] \label{thm-vol}
Keeping the above notations, assume that $\pi$ is not isotrivial. Suppose also that for some $u_0 \in U$, $\phi_{u_0} : C_{u_0} \ra A$ is birational onto its image $Y_{u_0}$, and that $Y_{u_0}$ generates $A$ as a group. Then, for general 2-dimensional $W \subseteq H^0\left(A,\Omega_A^1\right)$ and general $u \in U$, the adjoint class $[w_u]$ is non-zero.
\end{thm}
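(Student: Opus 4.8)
The plan is to argue by contradiction: I will translate the vanishing of all the adjoint classes into a Castelnuovo--de Franchis configuration on the total space $\caC$, and then use that configuration, together with the hypotheses ``$\phi_{u_0}$ birational onto $Y_{u_0}$'' and ``$Y_{u_0}$ generates $A$'', to force the family to be isotrivial. Since for each fixed $2$-dimensional $W$ the set $\{u\in U:[w_u]=0\}$ is a closed analytic subset of the disc $U$, the negation of the statement provides a $W$, which I may take generic enough that $\phi_{u}^*$ is injective on $W$ for all $u$ (possible because $Y_{u_0}$ generates $A$, so $\phi_{u_0}^*$ is injective on $H^0(A,\Omega^1_A)$), with $[w_u]=0$ for \emph{every} $u\in U$ (shrinking $U$ if necessary). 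The key preliminary observation is a geometric description of $w_u$. Writing $\Phi_A=\mathrm{pr}_A\circ\Phi:\caC\ra A$, the forms $\tilde\eta_i:=\Phi_A^*\omega_i$ are \emph{global} holomorphic $1$-forms on $\caC$ restricting to $\eta_i:=\phi_u^*\omega_i$ on each $C_u$. Restricting $\tilde\eta_i$ to $C_u$ gives a section of $\Omega^1_\caC|_{C_u}=\caE_u$ lifting $\eta_i$; using these as the lifts $s_i$ in the definition of the adjoint class, together with the identification $\bigwedge^2\bigl(\Omega^1_\caC|_{C_u}\bigr)=\omega_\caC|_{C_u}\cong\omega_{C_u}$, one obtains
\beqn
w_u=\bigl(\Phi_A^*(\omega_1\wedge\omega_2)\bigr)\big|_{C_u},
\enqn
i.e. $w_u$ is the restriction to $C_u$ of the global holomorphic $2$-form $\Phi_A^*(\omega_1\wedge\omega_2)$ on the surface $\caC$.

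Next I would produce the Castelnuovo--de Franchis configuration. Fix a holomorphic coordinate $t$ on $U$; since $\pi^*dt$ vanishes on every fibre, $(\tilde\eta_i\wedge\pi^*dt)|_{C_u}=\eta_i$ under the identification above. Writing $w_u=a(u)\eta_1+b(u)\eta_2$ with $a,b$ holomorphic on $U$ (using the linear independence of $\eta_1,\eta_2$), the $2$-form
\beqn
\Phi_A^*(\omega_1\wedge\omega_2)-\bigl(\pi^*a\cdot\tilde\eta_1+\pi^*b\cdot\tilde\eta_2\bigr)\wedge\pi^*dt
\enqn
restricts to $w_u-a(u)\eta_1-b(u)\eta_2=0$ on every fibre, hence vanishes identically on $\caC$. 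Rearranging this identity factors it as $\theta_1\wedge\theta_2=0$, where $\theta_1:=\tilde\eta_1+\pi^*(b\,dt)$ and $\theta_2:=\tilde\eta_2-\pi^*(a\,dt)$. Both $\theta_i$ are closed holomorphic $1$-forms on $\caC$ (each summand is closed: $\omega_i$ is translation-invariant, and $b\,dt$, $a\,dt$ are $1$-forms on a curve), they restrict to $\eta_1,\eta_2$ on each fibre, and so are linearly independent for general $u$. After replacing $\caC\ra U$ by a projective completion $\bar S\ra\bar B$ to which the $\theta_i$ extend, the classical Castelnuovo--de Franchis theorem produces a fibration $\psi:\bar S\ra\bar\Gamma$ onto a \emph{fixed} curve $\bar\Gamma$ of genus $\ge 2$ and $1$-forms $\sigma_1,\sigma_2$ on $\bar\Gamma$ with $\theta_i=\psi^*\sigma_i$; restricting to a general fibre gives $\phi_u^*\omega_i=\psi_u^*\sigma_i$, where $\psi_u:=\psi|_{C_u}:C_u\ra\bar\Gamma$ is surjective of degree $C_u\cdot F_\psi$ \emph{independent of }$u$.

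Finally I would derive the contradiction. Consider first $\dim A=2$, so $W=H^0(A,\Omega^1_A)$ and \emph{all} pulled-back $1$-forms factor through $\psi_u$. Passing to Jacobians, $\hat\phi_u:J(C_u)\ra A$ then factors, up to isogeny, through $\hat\psi_u:J(C_u)\ra J(\bar\Gamma)$ via a homomorphism $\rho_u:J(\bar\Gamma)\ra A$; but $J(\bar\Gamma)$ and $A$ are fixed, so $\rho_u$ varies in the finitely generated group $\Hom(J(\bar\Gamma),A)$ and is therefore constant. Hence $Y_u=\phi_u(C_u)=\rho\bigl(\iota(\bar\Gamma)\bigr)$ is, up to translation, a \emph{fixed} curve in $A$, and $\phi_u$ factors as $C_u\xrightarrow{\psi_u}\bar\Gamma\ra Y_u$. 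Since $\phi_{u_0}$ is birational onto $Y_{u_0}$, both factors have degree $1$ at $u_0$; but $\deg\psi_u$ is independent of $u$, so $\psi_u:C_u\ra\bar\Gamma$ is an isomorphism for every $u$, whence the family is isotrivial --- a contradiction. For $\dim A\ge 3$ one runs the same argument for a general $2$-plane $W$, obtaining a fibration $\psi^W:\bar S\ra\bar\Gamma^W$ through which $\phi_u^*W$ factors. The step I expect to be the real obstacle is to upgrade this partial factorization into a genuine factorization of $\phi_u$ itself through a single fixed curve (up to translation): this is precisely where one must use simultaneously that $\phi_{u_0}$ is birational onto its image and that $Y_{u_0}$ generates $A$, so that the $2$-planes $W$ collectively recover $\phi_{u_0}$ and the fixed-curve factorizations can be glued; once this is done the degree/isotriviality argument of the two-dimensional case applies verbatim. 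The remaining points --- extending the $\theta_i$ to a projective model so that Castelnuovo--de Franchis applies, the isogeny bookkeeping for non-connected kernels, and the degenerate case $g(\bar\Gamma)=1$ (which only makes the family \emph{more} rigid) --- are routine by comparison.
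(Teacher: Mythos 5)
The paper does not prove this statement: the Volumetric Theorem is quoted directly from Pirola and Zucconi (\cite{PZ}, Theorem~1.5.3), so there is no ``paper's own proof'' to compare against. I will therefore evaluate your attempt on its own merits.

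Your preliminary computation is correct and well observed: indeed $w_u$ is the restriction to $C_u$ of the global $2$-form $\Phi_A^*(\omega_1\wedge\omega_2)$ under the natural trivialization of $N_{C_u/\caC}$, and the algebraic rearrangement producing closed $1$-forms $\theta_1,\theta_2$ with $\theta_1\wedge\theta_2=0$ is valid. The strategy of passing from vanishing adjoint classes to a Castelnuovo--de Franchis configuration is a natural idea. But there are two genuine gaps, and you have put your finger on only one of them.

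\begin{itemize}
\item \emph{The compactification step.} The theorem is stated for a family over an open disc $U$, and there is no reason a projective completion $\bar S\ra\bar B$ exists to which the $\theta_i$ extend. The coefficient functions $a(u),b(u)$ are merely holomorphic on the disc and need not extend meromorphically across the boundary (a generic element of $\caO(\mathbb{D})$ has the circle as natural boundary), and the map $\Phi_A$ itself need not extend. Without such an extension the classical (compact) Castelnuovo--de Franchis theorem cannot be invoked, and the non-compact versions require hypotheses you have not verified. This is not a technicality: the authors themselves remark immediately after the theorem statement that Xiao's structurally very similar argument works over $\mP^1$ but that adapting it to an arbitrary base ``(even just an open disk)'' introduces ``some technicalities to be solved.'' The Pirola--Zucconi proof is deliberately local in nature precisely to avoid this issue.
\item \emph{The case $\dim A\ge 3$.} You acknowledge this gap explicitly. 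For different generic $2$-planes $W$ the Castelnuovo--de Franchis construction may produce genuinely unrelated fibrations $\psi^W:\bar S\ra\bar\Gamma^W$, and gluing these into a single factorization of $\phi_u$ through one fixed curve requires a real argument, not just ``the same argument applies.'' As stated, the proof is incomplete in exactly the range of dimensions where the theorem is actually needed.
\end{itemize}

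A smaller point: in the $\dim A=2$ case you assert that the factorization $\phi_u^*=\psi_u^*\circ\rho_u^*$ on $1$-forms promotes to a factorization $J(C_u)\ra J(\bar\Gamma)\ra A$ of abelian varieties; this needs an argument (the cotangent map determines the morphism only once you know it respects the lattices), and you also need $Y_u$ to generate $A$ for \emph{general} $u$, not just $u_0$. These are repairable but should be said. Overall the outline is suggestive, but the two gaps above are substantial and not merely ``routine by comparison.''
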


It is worth to note that the construction of the adjoint image is very similar to the construction used by Xiao to prove the inequality (\ref{main-ineq}). In fact, the Volumetric Theorem for curves (its original statement admits fibres of any dimension) resembles the Lemma in \cite{Xiao-P1}. Furthermore, the proof of this Lemma could be adapted to prove the Volumetric Theorem as stated here. However, since the latter admits any base curve (even just an open disk) and not only $\mP^1$, there would appear some technicalities to be solved.

We will now present Proposition \ref{pro-altern}, which gives the announced alternative proof of Case 1 in the proof of Theorem \ref{main-thm}. This Proposition uses the Volumetric Theorem \ref{thm-vol} instead of Theorem \ref{thm-struc}, and hence applies for non-necessarily compact families. Note also that, because of the above discussion, the use of the Volumetric Theorem could be avoided by adapting Xiao's argument.

\begin{pro} \label{pro-altern}
Suppose that $f: S \ra B$ is a fibration where the base $B$ is a smooth, not necessarily compact curve. Assume that there is an Abelian variety $A$ of dimension $a$, and a morphism $\Phi: S \ra A \times B$ respecting the fibres of $f$ and such that the image of any restriction to a fibre $\phi_b: C_b \ra A$ generates $A$. Suppose also that the deformation is supported on a divisor $D \subset S$ such that $h^0\left(C_b,\caO_{C_b}\left(D_{|C_b}\right)\right)=1$ for general $b \in B$. If $a > \frac{g+1}{2}$, then $f$ is isotrivial.
\end{pro}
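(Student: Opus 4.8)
The plan is to argue by contradiction: I would assume $f$ is not isotrivial and derive a contradiction with the Volumetric Theorem \ref{thm-vol}, by showing that on a general fibre the rigidity of the supporting divisor forces \emph{every} relevant adjoint class to vanish. First I would put myself in a position to apply Theorem \ref{thm-vol}. Restricting $f$ to a small disc $U \subseteq B$ around a general point, the classifying map of $f$ to $\caM_g$ is non-constant (as $f$ is non-isotrivial), hence non-constant on $U$, so the restricted family $\caC \ra U$ is non-isotrivial. The decisive elementary point is that $a > \frac{g+1}{2}$ forces $\phi_b : C_b \ra A$ to be birational onto its image $Y_b$ for general $b$: otherwise $\phi_b$ factors through the normalization $\nu_b : \widetilde{Y}_b \ra Y_b$ with $\deg(C_b \ra \widetilde{Y}_b) \geq 2$, and since $Y_b$ generates $A$ one has $g(\widetilde{Y}_b) \geq a$, so Riemann--Hurwitz gives $2g - 2 \geq 2(2g(\widetilde{Y}_b) - 2) \geq 2(2a - 2)$, that is $a \leq \frac{g+1}{2}$, absurd. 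As $Y_b$ also generates $A$, the hypotheses of Theorem \ref{thm-vol} are met (with $u_0$ any general point).

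Next, I would identify $K_{\xi_b}$. For general $b$ the deformation $\xi_b$ is supported on $D_{|C_b}$; choose a minimal effective sub-divisor $\hat{D}_b \leq D_{|C_b}$ supporting $\xi_b$. Any $D' < \hat{D}_b$ satisfies $D' \leq D_{|C_b}$, so $\hat{D}_b$ is in fact minimally supporting $\xi_b$; and $1 \leq h^0(C_b, \caO_{C_b}(\hat{D}_b)) \leq h^0(C_b, \caO_{C_b}(D_{|C_b})) = 1$, so $\hat{D}_b$ is rigid, i.e. $r(\hat{D}_b) = 0$. Writing $\hat{d} = \deg \hat{D}_b$, Lemma \ref{lem-supp-inclu} gives $\rk \xi_b \leq \hat{d}$ and Theorem \ref{thm-supp-lower-bound} gives $\rk \xi_b \geq \Cliff(\hat{D}_b) = \hat{d}$, hence $\rk \xi_b = \hat{d}$ and $\dim K_{\xi_b} = g - \hat{d}$. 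Since $h^0(C_b, \omega_{C_b}(-\hat{D}_b)) = g - \hat{d}$ by Riemann--Roch and rigidity, and $H^0(C_b, \omega_{C_b}(-\hat{D}_b)) \subseteq K_{\xi_b}$ by Lemma \ref{lem-supp-inclu}, comparing dimensions yields $K_{\xi_b} = H^0(C_b, \omega_{C_b}(-\hat{D}_b))$. Finally, because $Y_b$ generates $A$ the pull-back $\phi_b^* : H^0(A, \Omega_A^1) \ra H^0(C_b, \omega_{C_b})$ is injective, so its image $W$ has dimension $a$, and $W \subseteq K_{\xi_b}$ since these forms extend over $U$ (the remark preceding Theorem \ref{thm-vol}). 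Therefore $W \subseteq H^0(C_b, \omega_{C_b}(-\hat{D}_b))$.

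Then I would show that every adjoint class vanishes. Let $W_2 \subseteq H^0(A, \Omega_A^1)$ be any $2$-dimensional subspace and put $W_{2,b} = \phi_b^* W_2 \subseteq W$. Since $\xi_b$ is supported on $\hat{D}_b$, the top row of diagram (\ref{diag-split}) for the divisor $\hat{D}_b$ splits, giving an $\caO_{C_b}$-linear injection $\widetilde{\sigma} : \omega_{C_b}(-\hat{D}_b) \hra \caE_b$, where $\caE_b$ is the extension of $\omega_{C_b}$ by $\caO_{C_b}$ defined by $\xi_b$, and $\widetilde{\sigma}$ lifts the inclusion $\omega_{C_b}(-\hat{D}_b) \hra \omega_{C_b}$ through $\caE_b \ra \omega_{C_b}$. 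For a basis $\eta_1, \eta_2$ of $W_{2,b} \subseteq H^0(C_b, \omega_{C_b}(-\hat{D}_b))$, the sections $s_i := \widetilde{\sigma}(\eta_i) \in H^0(C_b, \caE_b)$ are admissible lifts in the construction of the adjoint class; but $\eta_1, \eta_2$ are sections of the \emph{invertible} sheaf $\omega_{C_b}(-\hat{D}_b)$, hence proportional over the function field, so $s_1, s_2$ are proportional over the function field as well, and therefore $s_1 \wedge s_2 = 0$ in $H^0(C_b, \bigwedge^2 \caE_b) = H^0(C_b, \omega_{C_b})$. Thus the adjoint class of $W_{2,b}$ is zero for general $b$ and for \emph{every} $W_2$, contradicting Theorem \ref{thm-vol}; hence $f$ is isotrivial.

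The step I expect to be the main obstacle is getting into position to apply the Volumetric Theorem: the Riemann--Hurwitz reduction to $\phi_b$ birational onto its image is the only place where $a > \frac{g+1}{2}$ enters, and it is exactly what makes Theorem \ref{thm-vol} usable here. The other point requiring care is the identification $K_{\xi_b} = H^0(C_b, \omega_{C_b}(-\hat{D}_b))$, which hinges on combining Lemma \ref{lem-supp-inclu} with Theorem \ref{thm-supp-lower-bound} for the \emph{minimal} sub-divisor $\hat{D}_b$ (so that the rank of $\xi_b$ is pinned down to $\hat{d}$); once these are in place, the vanishing of the adjoint classes is a one-line consequence of the splitting of diagram (\ref{diag-split}). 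In contrast to the treatment of the rigid case via Theorem \ref{thm-struc}, this argument uses neither compactness of $S$ nor of $B$.
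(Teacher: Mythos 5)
Your proposal is correct and follows essentially the same strategy as the paper's proof: use $a > \frac{g+1}{2}$ together with Riemann--Hurwitz to make $\phi_b$ birational onto its image, invoke the Volumetric Theorem to get a non-vanishing adjoint class for a general fibre (assuming non-isotriviality), identify $K_{\xi_b} = H^0(C_b, \omega_{C_b}(-D_{|C_b}))$ by pinning down $\rk \xi_b$ via Lemma~\ref{lem-supp-inclu} and Theorem~\ref{thm-supp-lower-bound}, and then observe that the splitting in diagram~(\ref{diag-split}) forces every adjoint class built from $V = \phi_b^* H^0(A, \Omega_A^1)$ to vanish. The one place where your treatment deviates is in how you arrange for the supporting divisor to be \emph{minimally} supporting so that Theorem~\ref{thm-supp-lower-bound} applies: the paper performs a base change (as in Case~2 of the proof of Theorem~\ref{main-thm}) to replace $D$ by a minimal supporting divisor globally, whereas you pass fibre-by-fibre to a minimal effective subdivisor $\hat{D}_b \leq D_{|C_b}$. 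Your variant is perfectly valid and arguably a bit cleaner in the present non-compact setting, since it avoids discussing branched covers of a non-compact base; it costs nothing because the conclusion (vanishing of the adjoint class) is a statement about each individual general fibre. The remaining details — the rigidity of $\hat{D}_b$ giving $\Cliff(\hat{D}_b) = \hat{d}$, the dimension count identifying $K_{\xi_b}$, and the observation that the lifted sections live in a common sub-line-bundle of $\caE_b$ and therefore wedge to zero — all match the paper.
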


\begin{rmk} \label{rmk-global-local}
As was already sketched in Remark \ref{rmk-kernel}, if the base $B$ is compact, we may take $A$ to be the kernel of the map induced between the Albanese varieties $a_f: \Alb\left(S\right) \ra J\left(B\right)$, which has dimension $a=q_f$. Indeed, the Albanese image of $S$ is contained in $a_f^{-1}\left(B\right)$, which is a fibre bundle over $B$ with fibre $A$ and can be trivialized after replacing $B$ by an open disk. The Albanese map induces then a morphism $\Phi$ as in Proposition \ref{pro-altern}, which gives indeed a new proof of the first case in the proof of Theorem \ref{main-thm} above.
\end{rmk}

\begin{rmk}
Moreover, if $f$ admits a section, then there exists a global trivialization $a_f^{-1}\left(B\right) \cong A \times B$. In this case, the Albanese map composed with the projection to $A$ gives a map $S \ra A$, whose ramification divisor is contained in any supporting divisor as in Theorem \ref{thm-supp-div}.
\end{rmk}

\begin{proof}[Proof of Proposition \ref{pro-altern}]
Take any $b \in B$ such that $C_b$ is smooth, and let $\widetilde{C_b}$ be the image of $\phi_b: C_b \ra A$. Since $\widetilde{C_b}$ generates $A$, it has genus $g' \geq \dim A = a > \frac{g+1}{2}$. This implies, by Riemann-Hurwitz, that $\phi_b$ is birational onto its image for any regular value $b \in B$.

If $f$ is not isotrivial, the Volumetric Theorem \ref{thm-vol} implies that, for a general fibre $C=C_b$, the adjoint class of a generic 2-dimensional subspace
\beqn
W \subseteq V:= H^0\left(A,\Omega_A^1\right) \subseteq H^0\left(C,\omega_C\right)
\enqn
is non-zero. In the case $B$ is compact, then $V$ coincides with the space $V_f$ appearing in the preceding sections (see Remarks \ref{rmk-kernel} and \ref{rmk-global-local}).

However, we will now show that, for {\em every} fibre, the adjoint class of {\em every} 2-dimensional subspace of $V$ vanishes, which finishes the proof. Fix any regular value $b \in B$ and denote by $C=C_b$ the corresponding fibre, by $\xi = \xi_b$ the infinitesimal deformation induced by $f$, and by $D = D_{|C}$ the restriction of the global divisor. Let also $K=K_{\xi}$ be the kernel of $\partial_{\xi}$. Since $\xi$ is supported on $D$, Lemma \ref{lem-supp-inclu} gives the inclusion $H^0\left(C,\omega_C\left(-D\right)\right) \subseteq K$, which is in fact an equality. Indeed, on the one hand we have
\beqn
\dim H^0\left(C,\omega_C\left(-D\right)\right) = g-\deg D
\enqn
because $D$ is rigid, while on the other hand it holds
\beqn
\dim K = g-\rk\xi = g-\deg D
\enqn
because of Lemma \ref{lem-supp-inclu} and Theorem \ref{thm-supp-lower-bound}. Therefore, $V \subseteq K = H^0\left(C,\omega_C\left(-D\right)\right)$, as claimed. Note that, in order to apply Theorem \ref{thm-supp-lower-bound}, we need that $\xi$ is minimally supported on $D$. If this were not the case, we can reduce to it after a base change, as in the second case of the proof of Theorem \ref{main-thm}. Clearly this base change does not affect the isotriviality of $f$, and naturally induces a new morphism $\Phi$ with the same properties, without changing the Abelian variety $A$.

Now, since $\xi$ is supported on $D$, the upper sequence in (\ref{diag-split})
is split, giving a lifting $\omega_C\left(-D\right) \hra \Omega_{S|C}^1$ such that every pair of elements of $H^0\left(C,\omega_C\left(-D\right)\right) \subseteq H^0\left(C,\Omega_{S|C}^1\right)$ wedge to zero (they are sections of the same sub-line bundle of $\Omega_{S|C}^1$), which completes the proof.
\end{proof}

\begin{rmk}
In the above proof, to show that the images $\widetilde{C_b}$ are all isomorphic it is only necessary to use the Volumetric Theorem \ref{thm-vol}. The inequality $a > \frac{g+1}{2}$ is only used, combined with Riemann-Hurwitz, to show that the maps $\phi_b$ are birational. Therefore, if we drop the inequality $a > \frac{g+1}{2}$ from the hypothesis (but still keep that the deformations are supported on rigid divisors), the same proof shows that the fibres $C_b$ are coverings of a fixed curve $\widetilde{C_b}$.
\end{rmk}



\bigskip

\noindent Miguel \'Angel Barja

Departament de Matem\`atica Aplicada I, Universitat Polit\`ecnica de Catalunya (UPC-BarcelonaTECH)

Av. Diagonal 647, 08028 Barcelona, Spain

{\bf miguel.angel.barja@upc.edu}

\medskip

\noindent V\'{\i}ctor Gonz\'{a}lez-Alonso

Institut f\"ur Algebraische Geometrie, Leibniz Universit\"at Hannover

Welfengarten 1, 30167 Hannover, Germany

{\bf gonzalez@math.uni-hannover.de}

\medskip

\noindent Juan Carlos Naranjo

Departament d'\`Algebra i Geometria, Universitat de Barcelona

Gran Via de les Corts Catalanes 585, 08007 Barcelona, Spain

{\bf jcnaranjo@ub.edu}

\end{document}